\renewcommand{\eqref}[1]{\hyperref[#1]{(\ref{#1})}}
\newlist{enumlist}{enumerate}{1}
\setlist[enumlist]{labelindent=0cm,label=(\roman*),labelwidth=4.5ex,labelsep=0.5ex,leftmargin=5ex,align=left,topsep=0.5ex,itemsep=1ex,parsep=1ex}
\newlist{itemlist}{itemize}{1}
\setlist[itemlist]{labelindent=0cm,label=$\bullet$,labelwidth=3.5ex,labelsep=0.5ex,leftmargin=4ex,align=left,topsep=0.5ex,itemsep=1ex,parsep=1ex}
\numberwithin{equation}{section}
\theoremstyle{definition}\newtheorem{definition}{Definition}[section]
\newtheorem{proposition}[definition]{Proposition}
\newtheorem{lemma}[definition]{Lemma}
\newtheorem{theorem}[definition]{Theorem}
\newtheorem{corollary}[definition]{Corollary}
\mathchardef\mhyphen="2D
\providecommand*{\diff}%
        {\@ifnextchar^{\DIfF}{\DIfF^{}}}
\def\DIfF^#1{%
        \mathop{\mathrm{\mathstrut d}}%
                \nolimits^{#1}\gobblespace
}
\def\gobblespace{%
        \futurelet\diffarg\opspace}
\def\opspace{%
        \let\DiffSpace\!%
        \ifx\diffarg(%
                \let\DiffSpace\relax
        \else
                \ifx\diffarg\[%
                        \let\DiffSpace\relax
                \else
                        \ifx\diffarg\{%
                                \let\DiffSpace\relax
                        \fi\fi\fi\DiffSpace} %
\begin{document}

\begin{center}
{\boldmath\LARGE\bf   A note on characterizations of relative amenability on finite von Neumann algebras}
\bigskip

{\sc by Xiaoyan Zhou{\renewcommand\thefootnote{1}\footnote{\noindent School of Mathematical Sciences, Dalian University of Technology. Dalian (China).\\ E-mail: doctoryan@mail.dlut.edu.cn and junshengfang@hotmail.com.}}and Junsheng Fang$^1$}
\end{center}

\medskip

\begin{abstract}\noindent
In this paper, we give another two characterizations of relative amenability on finite von Neumann algebras, one of which can be thought of as an analogue of injective operator systems. As an application, we prove a stable property of relative amenable inclusions. We prove that under certain assumptions, the inclusion $N=\int_{X} \bigoplus
N_{p} d \mu\subset M=\int_{X} \bigoplus
M_{p} d \mu$ is amenable if and only if $N_p\subset M_p$ is amenable almost everywhere.
\end{abstract}

\section{Introduction}

 One of the most important results in the theory of von Neumann algebras is the classification of injective factors (cases $II_1$, $II_\infty$, $III_{\lambda,~\lambda\neq 1}$) which was due to Connes \cite{cones}. Because of this work, and also with contributions from Choi, Effros, Lance, and Wassermann, mathematicians can characterize amenable type $II_1$ factors in many ways. Extending amenability of von Neumann algebras, Popa introduced the notion for relative amenability of von Neumann subalgebras \cite{Po86}. 
 Just like amenable $II_1$ factors having many characterizations (for example, semi-discretness, injectivity, property P of Schwartz, approximately finite-dimensional, etc.), Popa proved that relative amenability also has several equivalent descriptions \cite{Po86}. In fact, those characterizations are the analogues of the equivalent descriptions of amenability of von Neumann algebras. But there are still several equivalent descriptions of amenability for $II_1$
 factors for which he didn't find good analogue notions equivalent to the relative amenability. For example the semi-discretness, the isomorphism of $C^*(R,R')$ and $R\otimes_{min} R$, the approximate innerness of the flip automorphism on $R\otimes R$ and the existence of normal finite range completely positive maps tending to the identity. In \cite{M90}, Mingo proves the case of semi-discretness.

 We summarize the known characterizations of relative amenability on finite von Neumann algebras due to Popa \cite{Po86,MP03} and Mingo \cite{M90}.
 \begin{theorem}
Assume $M$ is a finite von Neumann algebra with a faithful normal trace $\tau$ and $N\subset M$ is a von Neumann subalgebra. The followings are equivalent.
\begin{enumerate}
\item The inclusion $N\subset M$ is amenable.
\item There exists a conditional expectation from $\langle M,e_N\rangle$ onto $M$.
\item $\langle M,e_N\rangle$ has a state that contains $M$ in its centralizer.
\item $H_N^1(M,X)=0$ for any dual $M$ bimodule $X$.
\item $M$ has a normal virtual $N$-diagonal.
\item The identity map on $M$ can be approximately factored by $E_N$.
\item For any $\varepsilon>0$ and any finite $F\subset U(M)$, there is a projection $f\in \langle M,e_N\rangle$ with $Tr(f)<\infty$ such that $\|ufu^*-f\|_2<\varepsilon \|f\|_{2,Tr}$ for all $u\in F$.
\end{enumerate}
\end{theorem}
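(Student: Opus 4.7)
The statement is presented as a summary of existing equivalent characterizations of relative amenability, so my plan is to identify, for each implication, whether it follows directly from the cited literature or requires a short bridging argument. Specifically, (1)$\Leftrightarrow$(2)$\Leftrightarrow$(4)$\Leftrightarrow$(5)$\Leftrightarrow$(7) are due to Popa \cite{Po86}, with the normal virtual $N$-diagonal formulation (5) refined in \cite{MP03}, while (1)$\Leftrightarrow$(6) is Mingo's semi-discreteness-type characterization \cite{M90}. The only equivalence not already isolated as a separate named statement in the literature is (2)$\Leftrightarrow$(3), which I would prove directly.

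For (2)$\Rightarrow$(3): given a conditional expectation $E:\langle M, e_N\rangle \to M$, the state $\omega := \tau \circ E$ satisfies, for any $a \in M$ and $x \in \langle M, e_N\rangle$,
\[
\omega(ax) = \tau(E(ax)) = \tau(a E(x)) = \tau(E(x)a) = \omega(xa),
\]
by the $M$-bimodularity of $E$ and the trace property of $\tau$; hence $M$ lies in the centralizer of $\omega$. For (3)$\Rightarrow$(2), given an $M$-central state $\omega$ on $\langle M, e_N\rangle$ (which can be taken to satisfy $\omega|_M = \tau$ after a standard normalization), one defines $E(x) \in M$ via the duality $\tau(aE(x)) = \omega(ax)$ for $a \in M$, provided the functional $a\mapsto \omega(ax)$ is normal on $M$; normality here comes from $\omega|_M = \tau$ together with the $M$-centralizer condition. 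Tomiyama's theorem then upgrades the resulting norm-one $M$-bimodular projection to a conditional expectation.

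The technical heart of the theorem lies in the implications among (1), (2) and (7), where Popa's basic construction plays the central role: one works with $\langle M, e_N\rangle$ as the correct analogue of $B(L^2(M))$ for the inclusion, equipped with its canonical semifinite trace $\Tr$ satisfying $\Tr(x e_N y) = \tau(xy)$ for $x, y \in M$. The main obstacle, were one to rederive (1)$\Rightarrow$(7) from scratch, is the extraction of finite-trace projections in $\langle M, e_N\rangle$ from the abstract $M$-central state: this is a relative version of the Day--Namioka convexity argument and the crucial step performed in \cite{Po86}. Since the theorem here serves only as a starting point for the new characterizations given in the remainder of the paper, the proof reduces to invoking the cited literature for the substantive implications and verifying (2)$\Leftrightarrow$(3) by the short calculation above.
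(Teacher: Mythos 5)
The paper offers no proof of this theorem at all: it is stated as a summary of known results, with the equivalences attributed to Popa \cite{Po86}, Monod--Popa \cite{MP03} and Mingo \cite{M90}. So your overall strategy --- invoke the cited literature for the substantive implications --- is exactly the paper's. Note also that the hypertrace-type condition (3) is already among the equivalences established in \cite{Po86} (the paper attributes all seven items to the cited sources), so your premise that (2)$\Leftrightarrow$(3) needs a separate bridging argument is not really necessary.

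The bridging argument you do supply, however, has a genuine gap in the direction (3)$\Rightarrow$(2). Your definition of $E$ through the duality $\tau(aE(x))=\omega(ax)$ requires $\omega|_M=\tau$, and the assertion that this ``can be arranged after a standard normalization'' is unjustified: condition (3) only provides \emph{some} state on $\langle M,e_N\rangle$ with $M$ in its centralizer, and its restriction to $M$ is merely a tracial state, which for non-factor $M$ may be singular and quite different from $\tau$ (think of $M=\ell^\infty$ and a state built from a free ultrafilter); there is no evident modification of $\omega$ that keeps it $M$-central on all of $\langle M,e_N\rangle$ while forcing its restriction to be $\tau$. When $\omega|_M=\tau$ does hold your argument is sound: for $0\le T\le 1$ and $a\ge 0$ in $M$ one has $\omega(aT)=\omega(a^{1/2}Ta^{1/2})\le\tau(a)$, so $a\mapsto\omega(aT)$ is dominated by $\tau$ and hence normal, and bimodularity plus Tomiyama gives the expectation; and in the factor case $\omega|_M=\tau$ is automatic by uniqueness of the tracial state. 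But for general finite $M$ the known passage from an $M$-central state to a conditional expectation is the convexity/Day--Namioka-type argument of \cite{Po86} (running through the F{\o}lner-type condition (7)), not the direct duality you sketch. Your (2)$\Rightarrow$(3) computation is correct.
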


Let $B(H)$ (resp. $B(K)$) be the bounded operators on a separable Hilbert space $H$ (resp. $K$). A linear subspace $E\subseteq B(H)$ is said to be an operator system if $E$ contains the identity and $E$ is self-adjoint. Recall that an operator system $E\subseteq B(H)$ is injective \cite{CE77} if given operator systems $L\subseteq S\subseteq B(K)$, each completely positive map $\phi:L\rightarrow E$
has a completely positive extension $\psi:S\rightarrow E$. In case $E$ is a von Neumann algebra, this notion of injectivity for von Neumann algebras coincides with that of injective von Neumann algebras \cite{EL77}. In Section 3, we give relative amenability a characterization which can be seen as an analogue of injective operator systems. The proof ideas are benefited from \cite{cones,CE77}.

In direct integral theory, suppose $M$ is a von Neumann algebra acting on a separable Hilbert space $H$ with center $\mathcal Z_M$, then there is a direct integral decomposition of
$M$ relative to $\mathcal Z_M$, i.e., there exists a locally compact complete separable metric measure space $(X,\mu)$ such that $H=\int_X \bigoplus H_p d\mu$ and $M=\int_X \bigoplus M_p d\mu$, where each $H_p$ is a separable Hilbert space for $p\in X$ and $M_p$ is a factor in $B(H_p)$ almost everywhere. If $N\subset M$ is a von Neumann subalgebra, then $N$ is a decomposable von Neumann algebra. We may write $N=\int_X \bigoplus N_p d\mu$, then by the definition of decomposable von Neumann algebras, it is not difficult to see that $N_p \subset M_p$ almost everywhere.
Connes in \cite{cones} proved some stability properties of the class of injective von Neumann algebras. Among many interesting results, he proved that
\begin{proposition}\cite{cones}
Let $H$ be a separable Hilbert space, $X$, a standard Borel space with probability measure $\mu$ and $p\rightarrow M_p$ a Borel map from $X$ to von Neumann subalgebras of $B(H)$. Then $M=\int_X \bigoplus M_p d\mu$ is injective if and only if almost all $M_p$ are injective.
\end{proposition}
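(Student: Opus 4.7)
The plan is to use the classical equivalence that a von Neumann algebra $M \subset B(\widetilde H)$, where $\widetilde H = \int_X^{\oplus} H \, d\mu = H \otimes L^2(X,\mu)$, is injective if and only if there exists a (not necessarily normal) conditional expectation $E : B(\widetilde H) \recht M$. Both directions of the proposition then reduce to passing between such a conditional expectation on $M$ and a measurable field of conditional expectations on the fibers $M_p \subset B(H)$. The intermediate algebra that mediates the passage is $A = L^\infty(X,\mu) \ovt B(H)$, which sits between $M$ and $B(\widetilde H)$ and is itself injective (as the $W^*$-tensor product of two obviously injective algebras, abelian and type $I$).

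For the ``if'' direction, assume almost every $M_p$ is injective. First I would note that $A$ being injective provides a conditional expectation $B(\widetilde H) \recht A$, so it suffices to construct a conditional expectation $A \recht M$. For this I would make a measurable choice of conditional expectations $E_p : B(H) \recht M_p$: the set of unital completely positive idempotents onto $M_p$, viewed inside the unit ball of $CB(B(H))$ with the point-weak-$*$ topology, is nonempty and weak-$*$ compact, and I would apply a standard measurable selection theorem (Kuratowski--Ryll-Nardzewski, or Jankov--von Neumann) to the Borel field $p \mapsto M_p$ to obtain a measurable section $p \mapsto E_p$. Assembling these via $E(f)(p) = E_p(f(p))$ for $f \in A$ produces the desired conditional expectation onto $M = \int_X^{\oplus} M_p \, d\mu$.

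For the ``only if'' direction, assume $M$ is injective and fix a conditional expectation $F : B(\widetilde H) \recht M$. Because $\cZ(M) = L^\infty(X,\mu) \subset M$ and conditional expectations are automatically bimodule maps over their range, $F$ is $L^\infty(X,\mu)$-linear. Restricting to $A$ gives an $L^\infty(X,\mu)$-linear conditional expectation $E : A \recht M$. Standard direct integral theory then lets me disintegrate any such $L^\infty(X,\mu)$-linear completely positive map between decomposable algebras as $E = \int_X^{\oplus} E_p \, d\mu$, where each $E_p : B(H) \recht M_p$ is a conditional expectation for almost every $p$. This exhibits injectivity of $M_p$ almost everywhere.

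The main obstacle is making the measurable selection in the ``if'' direction fully rigorous: one must equip the space of completely positive unital maps $B(H) \recht B(H)$ with a standard Borel structure compatible with the Borel field $p \mapsto M_p$, verify that the set-valued map $p \mapsto \{\text{c.p.\ projections onto } M_p\}$ is measurable with weak-$*$ closed values, and then invoke the selection theorem. The symmetric subtlety in the ``only if'' direction is the justification of the direct integral decomposition of $L^\infty(X,\mu)$-linear completely positive maps; this is folklore in direct integral theory but requires care to formulate precisely, and in practice is where the bulk of the technical work will lie.
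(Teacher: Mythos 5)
There is a genuine gap, and it sits exactly at the two steps you flag as ``technical'': both fail for the same reason, namely that the conditional expectations witnessing injectivity are in general \emph{not normal}. In your ``only if'' direction, the expectation $F:B(\widetilde H)\rightarrow M$ (hence $E=F|_A$ with $A=L^\infty(X,\mu)\,\overline{\otimes}\,B(H)$) is typically singular, and the ``standard direct integral theory'' you invoke to write $E=\int_X^{\oplus}E_p\,d\mu$ only applies to normal (weak-$*$ continuous) $L^\infty(X,\mu)$-linear maps and functionals; a non-normal $L^\infty$-bimodular completely positive map need not admit any disintegration into fiber maps (already a singular state on $L^\infty(X,\mu)\,\overline{\otimes}\,B(H)$ has no decomposition $\int\rho_p\,d\mu$), so this direction as written does not go through. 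Symmetrically, in the ``if'' direction the set of (non-normal) unital completely positive idempotents of $B(H)$ with range $M_p$ lives in the point-weak-$*$ compact but non-metrizable, non-separable space of u.c.p.\ maps on $B(H)$; there is no evident standard Borel structure there, so Kuratowski--Ryll-Nardzewski or Jankov--von Neumann cannot be applied to $p\mapsto\{\text{expectations onto }M_p\}$, and even granting a pointwise choice $p\mapsto E_p$, measurability of $p\mapsto E_p(f(p))$ for \emph{every} decomposable $f$ does not follow, since a non-normal map is not determined by its values on a countable (or weakly dense separable) family. (A smaller slip: $\mathcal Z(M)=L^\infty(X,\mu)$ need not hold since the fibers are not assumed to be factors; you only need, and only have in general, $L^\infty(X,\mu)\subset\mathcal Z(M)$.)

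This is precisely why Connes's proof of this proposition, and the proof of the relative analogue in Section 4 of this paper, avoid expectations altogether: injectivity (resp.\ amenability of $N\subset M$) is first recast as a countable family of matrix ``interpolation'' conditions --- for each $n,j,q,k$, whenever $\lambda_{n,j}\otimes b\leq S$ with $b=b^*$ in the unit ball of $B(H)$ (resp.\ $\langle M,e_N\rangle$) and $S=S^*\in\mathbb{M}_n(\mathbb{C})\otimes M$, some $x=x^*\in M$, $\|x\|\leq 1$, satisfies $\lambda_{n,j}\otimes x\leq S$ --- with failure witnessed, via Hahn--Banach separation, by a \emph{normal} positive functional $\phi$. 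All objects involved (self-adjoint elements of bounded balls, which are Polish in the weak topology for separable $H$, and normal functionals on an algebra with separable predual) disintegrate over $(X,\mu)$ and live in standard Borel spaces, so the measurable selection is of elements $z_p\in (M_p)_h$ with $\|z_p\|\leq 1$, not of non-normal maps. If you want to salvage your route you would have to either prove a selection/disintegration theorem for singular expectations (not available) or switch to a characterization of injectivity expressible through normal, separable data --- which is exactly the reformulation the paper's Theorem 3.1/Corollary 3.2 provides and its Proposition 4.5 exploits.
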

Inspired by this result, it is natural to ask whether the inclusion $N\subset M $ is amenable imply $N_p\subset M_p$ is amenable almost everywhere and vice versa.
In Section 4, we prove that under certain assumptions, the above question has an affirmative answer.

\section{Preliminaries}
In this section, we recall some basic concepts that will be used later.
\subsection{The basic construction}
Let $M$ be a finite von Neumann algebra with a faithful normal trace $\tau$ and $N\subset M$ is a von Neumann subalgebra. We endow $M$ with the sesquilinear form $\langle x,y\rangle=\tau(x^*y)$, for $x,y\in M$. Denote by $L^2(M)$ the completion of $M$ with respect to $\langle \cdot,\cdot\rangle$. The corresponding $\|\cdot\|_\tau$ on $M$ is defined by $\|x\|_\tau=\sqrt{\tau(x^*x)}$. For $x,y\in M$, we put $\pi(x)\hat{y}=\widehat{xy}$. We have $$\|\pi(x)\hat{y}\|^2_\tau=\tau(y^*x^*xy)\leq \|x^*x\|\tau(y^*y)=\|x\|^2\|x\|^2_\tau.$$
Note that the unit vector $\hat{1}$ is cyclic and separating for $\pi(M)$. For simplicity, we write $x\hat{y}=\pi(x)\hat{y}$.
Let $J:L^2(M)\rightarrow L^2(M)$ by $x\hat{1}\mapsto x^*\hat{1}$, which extends to an antilinear surjective isometry of $L^2(M)$. We say that $J$ is \emph{the canonical conjugation operator} on $L^2(M)$.

Let $e_N$ denote the projection from $L^2(M)$ onto $L^2(N)$. The \emph{trace preserving conditional expectation $E_N$} from $M$ onto $N$ is defined to be the restriction $e_N|M$.
The \emph{basic construction} from the inclusion $N\subset M$ is defined to be the von Neumann algebra $\langle M,e_N\rangle:=(M, e_N)''$.
\begin{lemma}
Let $M$ be a finite von Neumann algebra with a faithful normal trace $\tau$ and $N\subset M$ is a von Neumann subalgebra. Then $e_N$ and $E_N$ have the following properties:
\begin{enumlist}
\item $e_N|N=E_N$ is a norm reducing map from $M$ onto $N$ with $E_N(1)=1$;
\item $E_N(bxc)=bE_N(x)b$ for all $x\in M,~b,c\in N$;
\item $\tau(xE_N(y))=\tau(E_N(x)E_N(y))=\tau(E_N(x)y)$ for all $x,y\in M$;
\item $e_Nxe_N=E_N(x)e_N=e_NE_N(x)$ for all $x\in M$;
\item $\langle M,e_N\rangle=JN'J$, $\langle M,e_N\rangle'=JNJ$, and the $^*$-subalgebra $Me_NM$ is weakly dense in $\langle M,e_N\rangle$.
\end{enumlist}
\end{lemma}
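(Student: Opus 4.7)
The plan is to verify the five items in sequence, leaning throughout on the fact that $e_N$ is the orthogonal projection from $L^2(M)$ onto $L^2(N)$, that $\hat 1$ is cyclic and separating trace vector, and two compatibility observations that $L^2(N)$ is invariant under both the left $N$-action $y\mapsto \pi(y)$ and the right $N$-action $y\mapsto Jy^*J$ on $L^2(M)$. Since the orthogonal complement $L^2(N)^\perp$ is also invariant under these two actions (a one-line inner product check shows $\langle b\eta,\hat y\rangle=0$ whenever $\eta\perp L^2(N)$ and $y\in N$), the projection $e_N$ commutes with $\pi(N)$ and with $JNJ$. This single fact essentially drives items (i)--(iv).

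For (i), $E_N(1)=e_N\hat 1=\hat 1$ is immediate. To show $E_N$ lands in $N$ contractively, I would introduce $T_x\colon L^2(N)\to L^2(N)$, $T_x\hat y = e_N\widehat{xy}$, which is bounded with $\|T_x\|\leq\|x\|$. Using that $JNJ$ commutes with $\pi(x)$ (since $JNJ\subset JMJ=M'$) and with $e_N$, one checks $T_x$ commutes with the right $N$-action on $L^2(N)$; by the standard form description of $N$ this forces $T_x=\pi(z)|_{L^2(N)}$ for some $z\in N$ with $\|z\|\leq\|x\|$, and $E_N(x)=T_x\hat 1=z$. Item (ii) then follows from the two commutation relations: $e_N\widehat{bxc}=\pi(b)Jc^*J\,e_N\hat x=\widehat{bE_N(x)c}$ for $b,c\in N$. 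For (iii), self-adjointness of $e_N$ together with $\tau(y)=\langle\hat y,\hat 1\rangle$ yields $\tau(xE_N(y))=\langle e_N\hat y,\widehat{x^*}\rangle=\langle \hat y,\widehat{E_N(x^*)}\rangle=\tau(E_N(x)y)$, and the middle equality is $\tau(E_N(x)y)=\tau(E_N(E_N(x)y))=\tau(E_N(x)E_N(y))$ after applying (ii) with $b=E_N(x)\in N$.

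For (iv), I would evaluate both sides on $\hat y$ with $y\in M$: using (ii), $e_Nxe_N\hat y = e_N\widehat{xE_N(y)}=\widehat{E_N(x)E_N(y)}=\pi(E_N(x))e_N\hat y$, so $e_Nxe_N=E_N(x)e_N$; the second equality uses $E_N(x)\in N$ and $e_N\in N'$. For (v), the commutant computation is the heart: $\langle M,e_N\rangle'=M'\cap\{e_N\}'=JMJ\cap\{e_N\}'$. For $JxJ\in JMJ$, testing commutation with $e_N$ on $\hat 1$ gives $JxJe_N\hat 1=\widehat{x^*}$ and $e_NJxJ\hat 1=\widehat{E_N(x^*)}$, so by separation $JxJ$ commutes with $e_N$ iff $x\in N$; hence $\langle M,e_N\rangle'=JNJ$ and taking commutants once more gives $\langle M,e_N\rangle=JN'J$. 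For weak density of $Me_NM$, the identity $e_Nae_N=E_N(a)e_N$ from (iv) shows $Me_NM$ is a $*$-subalgebra, so its double commutant contains $M\cup\{e_N\}$ and equals $\langle M,e_N\rangle$.

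The only genuinely non-formal step is identifying the range of $T_x$ with left multiplication by an element of $N$ in part (i); everything else is a computation with projections and the conjugation $J$. That identification is where one silently uses the standard-form description of a finite von Neumann algebra (namely that $\pi(N)'\cap B(L^2(N))=JNJ$ and that bounded vectors come from algebra elements), so this is the main point I would be careful about if aiming for a fully self-contained write-up.
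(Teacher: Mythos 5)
The paper states this lemma without proof (it is the standard basic-construction lemma, implicitly taken from the literature), so there is no in-paper argument to compare against; judged on its own, your proof is the standard one and is essentially correct. The key observation that $e_N$ commutes with $\pi(N)$ and with $JNJ$ is right and does drive (i)--(iv), and your use of the commutation theorem on $L^2(N)$ to identify $T_x$ with left multiplication by an element of $N$ is exactly the standard step (you correctly flag it); note also that you have silently repaired two typos in the statement ($e_N|_M=E_N$ in (i) and $bE_N(x)c$ in (ii)), which is fine. The one place that is too terse is the last sentence of (v): from ``$Me_NM$ is a $*$-subalgebra'' it is not immediate that its double commutant contains $M\cup\{e_N\}$, nor that its double commutant equals its weak closure, since $Me_NM$ need not be unital. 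Both points are easy but need saying: $Me_NM$ acts nondegenerately on $L^2(M)$ because $xe_N\hat 1=\hat x$, so von Neumann's bicommutant theorem applies to it; and if $t\in(Me_NM)'$ then for $a\in M$ one has $(ta-at)(xe_N y)=(axe_Ny)t-(axe_Ny)t=0$ and similarly $(te_N-e_Nt)(xe_Ny)=0$ using $e_Nxe_Ny=E_N(x)e_Ny\in Me_NM$, so $(Me_NM)'\subseteq M'\cap\{e_N\}'=\langle M,e_N\rangle'$, whence $(Me_NM)''=\langle M,e_N\rangle$ and weak density follows. (Alternatively, one can argue that the weak closure of $Me_NM$ is a weakly closed two-sided ideal of $\langle M,e_N\rangle$ containing $e_N$, whose central support is $1$.) With that one-line repair your write-up is complete.
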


\subsection{Relative amenability}
Let $M$ be a finite von Neumann algebra with a faithful normal trace $\tau$. Given a normal completely positive map $\phi:M\rightarrow M$, we can use the Stinespring dilation to construct a correspondence which is denoted by $H_{\phi}$. Define on the linear space $H_{0}=M\otimes M$ a sesquiliniar form
$\langle x_{1}\otimes y_{1},x_{2}\otimes y_{2} \rangle_{\phi}=\tau(\phi(x^{*}_{2}x_{1})y_{1}y_{2}^{*})$, $\forall x_{1}, y_{1},x_{2},y_{2}\in M$.
It is easy to check that the complete positivity of $\phi$ is equivalent to the positivity of $\langle \cdot,\cdot\rangle_{\phi}$. Let $H_{\phi}$
be the completion of $H_{0}/\sim$, where $\sim$ is the equivalence modulo the null space of $\langle \cdot,\cdot\rangle_{\phi}$. Then $H_{\phi}$ is a correspondence of $M$ and the bimodule structure is given by $x(x_{1}\otimes y_{1})y=xx_{1}\otimes y_{1}y$. We call $H_{\phi}$ the correspondence of $M$
associated to $\phi$, see \cite{Po86}.

If we regard correspondences as $*$-representations, we can define a topology on these correspondences which is just the usual topology on the set of equivalent classes of representations of $N\otimes M^{op}$. Under this topology, we say a correspondence \emph{$H_{1}$ is weakly contained in $H_{2}$} if $H_{1}$ is in the closure of $H_{2}$.

\begin{definition}Let $M$ be a finite von Neumann algebra with a trace $\tau$, and $N$ be a von Neumann subalgebra of $M$, the inclusion \emph{$N\subset M$ is amenable} if $H_{id}$ is weakly contained in $H_{E_{N}}$, where $id$ is the identity map from $M$ to $M$ and $E_{N}$ is the faithful normal conditional expectation from $M$ onto $N$ preserving trace $\tau$.
\end{definition}
For more results on relative amenability, we refer the reader to \cite{Po86,A90,MP03,ZF17}.

\subsection{Direct integration}
General knowledge about direct integrals of separable Hilbert spaces and von Neumann algebras acting on separable Hilbert spaces can be found in \cite{D81,KR86}. Here we list a few definitions that will be needed in this paper.
\begin{definition}(\cite[14.1.1]{KR86})
If $X$ is a $\sigma$-compact locally compact space, $\mu$ is the completion of a Borel measure on $X$, and $\{H_p\}$ is a family of separable Hilbert spaces indexed by the points $p$ of $X$, we say that\emph{ a separable Hilbert space $H$ is the direct integral of $\{H_p\}$ over $(X,\mu)$ (we write $H=\int_X \bigoplus H_p d\mu(p)$)} when, to each $\xi\in H$, there corresponds a function $p\rightarrow \xi(p)$ on $X$ such that $\xi_p\in H_p$ for each $p$ such that
\begin{enumlist}
\item $p\rightarrow \langle \xi(p),\eta(p)\rangle$ is $\mu$-integral, when $\xi,\eta\in H$, and $\langle \xi,\eta\rangle=\int_X \langle \xi(p),\eta(p)\rangle d\mu(p)$.
\item If $u_p\in H_p$ for all $p\in X$ and $p\rightarrow \langle \mu_p,\eta(p)\rangle$ is $\mu$-integral for each $\eta\in H$, then there is a $\mu\in H$ such that $\mu(p)=\mu _p$ for almost $p$. We say that $\int_{X} \bigoplus H_{p} d \mu(p)$ and $p\rightarrow \xi(p)$ are the (direct integral) decomposition of $H$ and $\xi$, respectively.
\end{enumlist}
\end{definition}

\begin{definition}
Assume $H$ is the direct integral of $\{H_p\}$ over $(X,\mu)$.
\begin{enumlist}
\item (\cite[14.1.6]{KR86}) An operator $x\in B(H)$ is said to be \emph{decomposable} when there is a function $p\rightarrow x(p)$ on $X$ such that $x(p)\in B(H_p)$, and for each $\xi\in H$, $x_p\xi_p=(x\xi)(p)$ for almost $p$. If $x_p=f(p)I_p$, where $I_p$ is the identity operator on $H_p$, we say $x$ is \emph{diagonal}.
\item (\cite[14.1.12]{KR86}) A representation $\phi$ of a C*-algebra $A$ on $H$ is said to \emph{be decomposable over $(X,\mu)$} when there is representation $\phi_p$ of $A$ on $H_p$ such that $\phi(x)$
is decomposable for each $x\in A$ and $\phi(x)(p)=\phi_p(x)$ almost everywhere.
\item A state $\rho$ of $A$ is said to\emph{ be decomposable with decomposition $p\rightarrow \rho_p$} when $\rho_p$ is a positive linear functional on $A$ for each $p$, such that $\rho_p(x)=0$ when $\phi_p(x)=0$, $p\rightarrow \rho_p (x)$ is integral for each $x \in A$ and $\rho(x)=\int_X \rho_p(x)d\mu(p).$
\item (\cite[14.1.14]{KR86}) A von Neumann algebra $R$ on $H$ is said to \emph{be decomposable with decomposition $p\rightarrow R_p$ }when $R$ contains a norm-separable strong-operator-dense C*-algebra $A$ for which the identity representation $\iota$ is decomposable and such that $\iota_p(A)$ is strong-operator dense in $R_p$ almost everywhere.
\end{enumlist}
\end{definition}

\section{Two descriptions of relative amenability}
The purpose of this section is to give relative amenability two  characterizations.

Let $B(H)$ be the bounded operators on a Hilbert space $H$. A linear subspace $E\subseteq B(H)$ is said to be an operator system if $E$ contains the identity and $E$ is self-adjoint. Denote by $E_h$ the self-adjoint elements of $E$. The following result is inspired  by \cite{cones,CE77}.

\begin{theorem}\label{A}
Let $M$ be a finite von Neumann algebra and $N\subset M$ a von Neumann subalgebra. Then the following conditions are equivalent.
\begin{enumerate}
\item The inclusion $N\subset M$ is amenable.
\item For each $n\in \mathbb{N}$, $S=S^*\in \mathbb{M}_n(\mathbb{C})\otimes M$, each $\lambda=\lambda^*\in \mathbb{M}_n(\mathbb{C})$ such that $\lambda\otimes b\leq S$ for some $b=b^*\in \langle M,e_N\rangle$, there exists an $x=x^*\in M$ such that $\lambda\otimes x\leq S$.
\item Given each operator systems $E\subseteq F$, if completely positive map $\phi:E\rightarrow M$ has a completely positive extension $\psi:F\rightarrow \langle M,e_N\rangle$, then $\phi$ has a completely positive extension $\tilde{\phi}:F\rightarrow M$.
\end{enumerate}
\end{theorem}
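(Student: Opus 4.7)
My plan is to establish the easy equivalences $(1)\Leftrightarrow(3)$ and $(1)\Rightarrow(2)$ using the conditional expectation guaranteed by item (ii) of the theorem in the introduction, and then to prove the substantive direction $(2)\Rightarrow(1)$ by constructing a conditional expectation $\mathcal{E}\colon\langle M,e_N\rangle\to M$ via Zorn's lemma together with a weak-compactness argument that invokes condition $(2)$ at each inductive step.

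\emph{Easy directions.} For $(1)\Rightarrow(3)$: given a conditional expectation $\mathcal{E}\colon\langle M,e_N\rangle\to M$ and a cp extension $\psi\colon F\to\langle M,e_N\rangle$ of $\phi\colon E\to M$, the composition $\tilde{\phi}:=\mathcal{E}\circ\psi\colon F\to M$ is cp and extends $\phi$, because $\mathcal{E}|_M=\mathrm{id}_M$ and $\psi|_E=\phi$. For $(3)\Rightarrow(1)$: apply $(3)$ with $E=M$, $F=\langle M,e_N\rangle$, $\phi=\mathrm{id}_M$ and $\psi=\mathrm{id}_{\langle M,e_N\rangle}$; the resulting cp map $\tilde{\phi}\colon\langle M,e_N\rangle\to M$ is the identity on $M$, and Tomiyama's theorem identifies it as a conditional expectation. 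For $(1)\Rightarrow(2)$: apply the positive map $\mathrm{id}_{\mathbb{M}_n(\mathbb{C})}\otimes\mathcal{E}$ to $\lambda\otimes b\leq S$; since $S\in\mathbb{M}_n(\mathbb{C})\otimes M$ is fixed by this map and the left-hand side becomes $\lambda\otimes\mathcal{E}(b)$, I take $x:=\mathcal{E}(b)\in M$.

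\emph{The main implication $(2)\Rightarrow(1)$.} I would build $\mathcal{E}$ by Zorn's lemma on pairs $(V,P)$, where $V$ is a unital self-adjoint subspace with $M\subseteq V\subseteq\langle M,e_N\rangle$ and $P\colon V\to M$ is a unital completely positive map extending $\mathrm{id}_M$; chains admit upper bounds by taking the union of domains together with the common extension of the partial maps. Let $(V_0,P_0)$ be a maximal element; the claim is $V_0=\langle M,e_N\rangle$. If not, pick $a\in\langle M,e_N\rangle_{sa}\setminus V_0$. A cp extension $P_1\colon V_0+\mathbb{C}a\to M$ of $P_0$ with $P_1(a)=x\in M_{sa}$ exists iff, for every $n\in\mathbb{N}$, every $B\in\mathbb{M}_n(\mathbb{C})\otimes V_0$ self-adjoint and every $\lambda\in\mathbb{M}_n(\mathbb{C})_{sa}$ with $B+\lambda\otimes a\geq 0$ in $\mathbb{M}_n(\mathbb{C})\otimes\langle M,e_N\rangle$, one has $P_0(B)+\lambda\otimes x\geq 0$ in $\mathbb{M}_n(\mathbb{C})\otimes M$. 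Producing an auxiliary cp extension $\widetilde{P}_0\colon V_0+\mathbb{C}a\to\langle M,e_N\rangle$ of $P_0$ and setting $b:=\widetilde{P}_0(a)$, the $\mathbb{M}_n(\mathbb{C})$-amplification of $\widetilde{P}_0$ applied to $B+\lambda\otimes a\geq 0$ yields $P_0(B)+\lambda\otimes b\geq 0$ in $\mathbb{M}_n(\mathbb{C})\otimes\langle M,e_N\rangle$, with $P_0(B)\in\mathbb{M}_n(\mathbb{C})\otimes M$; condition $(2)$ applied with $-\lambda$ and $S:=P_0(B)$ then produces an $x\in M_{sa}$ valid for that single constraint. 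Finitely many constraints $(n_i,B_i,\lambda_i)$ are handled by the block-diagonal packaging $S=\bigoplus_i P_0(B_i)$, $\lambda=\bigoplus_i(-\lambda_i)$, with the same $b$, producing one common $x$ via a single application of $(2)$. The set of valid $x\in M_{sa}$ for each single constraint is $\sigma$-weakly closed, and all valid $x$ lie in a norm-bounded ball, so Banach-Alaoglu together with the finite intersection property yields an $x$ simultaneously valid for every constraint, contradicting the maximality of $(V_0,P_0)$.

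\emph{Main obstacle.} The most delicate step is the production of the auxiliary extension $\widetilde{P}_0\colon V_0+\mathbb{C}a\to\langle M,e_N\rangle$ at each inductive stage, since $\langle M,e_N\rangle$ is not a priori injective as a $C^*$-algebra. I expect to handle this by a one-element cp extension inside the monotone-complete vNA $\langle M,e_N\rangle$, if necessary invoking $(2)$ itself to guarantee consistency of the lift. Once such lifts are in hand, the reduction of matricial positivity to scalar instances of $(2)$ via block-diagonalization is the operator-system-theoretic analogue of the classical Choi-Effros arguments in \cite{CE77} and of the Connes-style injectivity manipulations in \cite{cones}.
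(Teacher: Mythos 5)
Your three easy implications $(1)\Rightarrow(2)$, $(1)\Rightarrow(3)$, $(3)\Rightarrow(1)$ are correct and essentially coincide with the paper's (the paper routes $(1)\Rightarrow(2)\Rightarrow(3)\Rightarrow(1)$, but composing with the conditional expectation for $(1)\Rightarrow(3)$ is the same computation). The genuine gap is the step you yourself flag in the main implication $(2)\Rightarrow(1)$: the auxiliary completely positive extension $\widetilde{P}_0\colon V_0+\mathbb{C}a\to\langle M,e_N\rangle$ of the current partial map $P_0\colon V_0\to M$. Nothing you invoke produces it. Monotone completeness of a von Neumann algebra gives no completely positive extension property (injectivity does, and $\langle M,e_N\rangle$ is in general not injective: take $N=M$ a non-injective $\mathrm{II}_1$ factor, so $\langle M,e_N\rangle=M$ while the inclusion is trivially amenable); and condition $(2)$ cannot ``guarantee consistency of the lift,'' because $(2)$ consumes a witness $b\in\langle M,e_N\rangle_h$ with $\lambda\otimes b\leq S$ and returns an element of $M$ --- it never produces the witness, which is exactly what $\widetilde{P}_0(a)$ was supposed to be. This is precisely the point where the classical Choi--Effros argument uses Arveson's extension theorem into the injective algebra $B(H)$; in the relative setting that tool is unavailable, so your Zorn scheme (partial cp maps $P\colon V\to M$ with $M\subseteq V\subseteq\langle M,e_N\rangle$) cannot get past even one inductive step. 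A smaller omission: the norm bound on the admissible $x$'s is not automatic; as in the paper you must include the extra constraint coming from $\|a\|(1\oplus 1)+(1\oplus(-1))\otimes a\geq 0$, which forces $\|x\|\leq\|a\|$, before invoking $\sigma$-weak compactness and the finite intersection property.

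The paper arranges the argument so that this extension problem never has to be solved: it proves $(2)\Rightarrow(3)$, where a cp extension $\psi\colon F\to\langle M,e_N\rangle$ of $\phi$ is part of the hypothesis. Its key claim is only the one-element statement that for each fixed $b=b^*\in\langle M,e_N\rangle$ there is a cp map $\omega\colon M+\mathbb{C}b\to M$ fixing $M$ --- there the witness needed for condition $(2)$ is $b$ itself, so no extension into $\langle M,e_N\rangle$ has to be manufactured --- and then it corrects $\psi$ one element at a time via $\omega\circ\psi$ with $b=\psi(h)$; finally $(3)\Rightarrow(1)$ is applied with $E=M$, $F=\langle M,e_N\rangle$, $\psi=\mathrm{id}$, plus Tomiyama. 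If you want to salvage your write-up, the repair is to prove $(2)\Rightarrow(3)$ in this form rather than $(2)\Rightarrow(1)$ directly: your constraint sets, block-diagonal packaging and compactness argument are exactly the proof of the paper's one-element claim, and the hypothesis of $(3)$ supplies the ambient extension you were missing. (Even on that route the transfinite iteration deserves care --- to continue past the first element one must know the corrected map $\omega\circ\psi|_{E+\mathbb{C}h}$ again extends into $\langle M,e_N\rangle$, a point the paper treats briefly --- but at least the initial witness comes for free, whereas in your scheme it is missing at every step.)
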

\begin{proof}
$1\Rightarrow2$. By \cite[Theorem 3.23]{Po86}, the inclusion $N\subset M$ is amenable if and only if there exists a conditional expectation $E$ from $\langle M,e_N\rangle$ onto $M$. Let $id$ be the identity map on $\mathbb{M}_n(\mathbb{C})$, then $id\otimes E$ is a unital completely positive map since $E$ is a unital completely positive map. Then $id\otimes E$ is a conditional expectation from $\mathbb{M}_n(\mathbb{C})\otimes \langle M,e_N\rangle$ onto $\mathbb{M}_n(\mathbb{C})\otimes M$. Thus $0\leq (id\otimes E)(S-\lambda\otimes b)=S-\lambda\otimes E(b)$. Our result follows from the fact that $E(b)=E(b)^*\in M$.

The proof of $2\Rightarrow3$ is heavily inspired by the proof of Choi and Effros \cite[Theorem 3.4]{CE77}.  

$2\Rightarrow3$. Claim that for any fixed $b\in \langle M,e_N\rangle_h$, there exists a completely positive map $\omega:M+\mathbb{C}b\rightarrow M$ such that $\omega(x)=x$ for all $x\in M$.

\emph{Proof of the claim}.
We select a $r\in M_h$ and define $\omega(x+\lambda b)=x+\lambda  r$ for $x\in M$, $\lambda\in\mathbb{C}$. The case $b\in M_h$ is trivial. We may assume $b\notin M$.
For an operator system $E$, $s=[s_{ij}]\in \mathbb{M}_n(M)_h$, and $\lambda=[\lambda_{ij}]\in \mathbb{M}_n(\mathbb{C})_h$, define $W_E(s,\lambda)\subseteq E$ by $$W_E(s,\lambda)=\{r\in E_h:[s_{ij}+\lambda_{ij} r]\geq0\}.$$
Note that $\omega$ will be a completely positive map if and only if for each $m\in\mathbb{N_+}$, $[s_{ij} +\lambda_{ij}b]\in \mathbb{M}_m(M+\mathbb{C}b)^+$ implies that $[s_{ij} +\lambda_{ij}r]\in \mathbb{M}_m(M)^+,$ i.e.,
\begin{equation}\label{1}r\in \bigcap_{m\in\mathbb{N_+},s,\lambda} \left\{W_{M}(s,\lambda):b\in W_{\langle M,e_N\rangle}(s,\lambda),~s\in \mathbb{M}_m(M)_h,~\lambda\in \mathbb{M}_m(\mathbb{C})_h\right\},\end{equation}
since $b\in \langle M,e_N\rangle_h\setminus M$.

Note that $$W_{M}(s_1,\lambda_1)\cap W_{M}(s_2,\lambda_2)=W_{M}(s_1\oplus s_2,\lambda_1\oplus \lambda_2),$$ which implies the sets $\{W_{M}(s,\lambda)\}$ are directed by the ordering $\supseteq$. Moreover, by condition 2, if $$b\in W_{\langle M,e_N\rangle}(s_1\oplus \ldots \oplus s_k,\lambda_1\oplus \ldots\oplus\lambda_k),$$ then $$W_{M}(s_1\oplus \ldots \oplus s_k,\lambda_1\oplus \ldots\oplus\lambda_k)=W_{M}(s_1,\lambda_1)\cap \ldots \cap W_{M}(s_k,\lambda_k)\neq \varnothing,$$ which means that finite intersection of $\left\{W_{M}(s,\lambda):b\in W_{\langle M,e_N\rangle}(s,\lambda)\right\}$ is not empty.

If we take $s=\|b\|(1\oplus 1)$, $\lambda=1\oplus -1$, then we have $b\in W_{\langle M,e_N\rangle}(s,\lambda),$ and
$$W_M(s,\lambda)=\{r\in M_h:\|r\|\leq\|b\|\},$$ which is $\sigma$-weakly compact. 
Thus we obtain that the family  $$\left\{W_{M}(s,\lambda):b\in W_{\langle M,e_N\rangle}(s,\lambda)\right\}$$ have the finite intersection property, hence $$\bigcap_{m\in\mathbb{N_+},s,\lambda}\left\{W_{M}(s,\lambda):b\in W_{\langle M,e_N\rangle}(s,\lambda)\right\}\neq\varnothing.$$

To see this, put $W_0=W_M(\|b\|(1\oplus 1),1\oplus -1)$, assume $\cap\left\{W_{M}(s,\lambda):b\in W_{\langle M,e_N\rangle}(s,\lambda)\right\}=\varnothing$, then $\cup\left\{W_{M}(s,\lambda):b\in W_{\langle M,e_N\rangle}(s,\lambda)\right\}^\complement=W_0.$ Since all the sets $\left\{W_{M}(s,\lambda):b\in W_{\langle M,e_N\rangle}(s,\lambda)\right\}$ are not empty and $\sigma$-weakly closed, and $W_0$ is a $\sigma$-weakly compact set, there exist finite intersection of
$\left\{W_{M}(s,\lambda):b\in W_{\langle M,e_N\rangle}(s,\lambda)\right\}$  which is empty, thus we get a contradiction.

Thus we may get a completely positive map $\omega$ by selecting $r$ in the intersection. Hence we finish the proof of the claim.

Let $h=h^{*}\in F\setminus E$. By the assumption in condition 3, let $\psi: E+\mathbb{C}h \rightarrow \langle M,e_N\rangle$ be the completely positive map such that $\psi|_E =\phi$. Put $b=\psi(h)$, by the above claim, there exists a completely positive map $\omega: M+\mathbb{C}b\rightarrow M$ such that $\omega(x)=x$ for all $x\in M$. Then $\omega\circ \psi:E+\mathbb{C}h \rightarrow M$ is a completely positive extension of $\phi$. Do this step by step, and then using Zorn's Lemma
we get a completely positive extension $\tilde{\phi}:F\rightarrow M$.

$3\Rightarrow1$. Let $E=M$, $F=\langle M,e_N\rangle$ and $\phi$ be the identity map on $M$. It is obvious that the identity map on $M$ has a completely positive extension on $\langle M,e_N\rangle$, then by condition 3, $\phi$ has a completely positive extension $\tilde{\phi}:\langle M,e_N\rangle\rightarrow M$. Note that $\tilde{\phi}$ is a projection from $\langle M,e_N\rangle$ onto $M$ and it is contractive, by \cite[Theorem 1.5.10]{O}, $\tilde{\phi}$ is a conditional expectation.
\end{proof}

In the proof of $1\Rightarrow2$ of Theorem \ref{A}, in fact, we can still obtain that $\|x\|\leq \|b\|$, or, if we assume $\|b\|\leq1$, then $\|x\|\leq1$. Thus we have the following result.
\begin{corollary}\label{c1}
Let $M$ be a finite von Neumann algebra and $N\subset M$ a von Neumann subalgebra. Then the inclusion $N\subset M$ is amenable if and only if for each $n\in \mathbb{N}$, $X=X^*\in \mathbb{M}_n(\mathbb{C})\otimes M$, each $\lambda=\lambda^*\in \mathbb{M}_n(\mathbb{C})$ such that $\lambda\otimes b\leq X$ for some $b=b^*\in \langle M,e_N\rangle$ with $\|b\|\leq1$, there exists an $x=x^*\in M$ with $\|x\|\leq1$ such that $\lambda\otimes x\leq X$.
\end{corollary}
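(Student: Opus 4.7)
The plan is to derive both directions of Corollary \ref{c1} from Theorem \ref{A} by inspecting norms in its proof and applying a simple homogeneity argument.

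For the forward direction, I would revisit the proof of $1 \Rightarrow 2$ in Theorem \ref{A}. Amenability supplies a conditional expectation $E : \langle M, e_N \rangle \rightarrow M$; the witness used there is $x := E(b)$, which satisfies $\lambda \otimes x \leq X$ via $(\id \otimes E)(X - \lambda \otimes b) \geq 0$. Since $E$ is unital completely positive, it is contractive, so $\|x\| = \|E(b)\| \leq \|b\| \leq 1$, giving the norm-bounded conclusion for free.

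For the reverse direction, I would show that the norm-constrained hypothesis in Corollary \ref{c1} already implies condition (2) of Theorem \ref{A}, and then invoke the theorem. Given any $b = b^* \in \langle M, e_N\rangle$ together with $\lambda = \lambda^* \in \mathbb{M}_n(\mathbb{C})$ and $X = X^* \in \mathbb{M}_n(\mathbb{C}) \otimes M$ satisfying $\lambda \otimes b \leq X$, the case $b = 0$ is trivial (take $x = 0$). Otherwise set $b' := b/\|b\|$ and $\lambda' := \|b\|\lambda$, so $\|b'\| = 1$ and $\lambda' \otimes b' = \lambda \otimes b \leq X$. The corollary's hypothesis then supplies $x' = x'^* \in M$ with $\|x'\| \leq 1$ and $\lambda' \otimes x' \leq X$, and $x := \|b\| x'$ satisfies $\lambda \otimes x \leq X$ with $x = x^* \in M$. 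This recovers condition (2) of Theorem \ref{A}, hence amenability of $N \subset M$.

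No serious obstacle arises: both implications are routine refinements of Theorem \ref{A}. The only point deserving a moment's care is the scaling step in the reverse direction, where positivity of the scalar $\|b\|$ is needed to preserve the operator inequality $\lambda \otimes b \leq X$ after rescaling; since $\|b\| > 0$ in the nontrivial case, this is immediate.
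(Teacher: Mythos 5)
Your proposal is correct and follows essentially the same route as the paper: the forward direction is exactly the paper's observation that the conditional expectation $E$ is contractive, so the witness $E(b)$ satisfies $\|E(b)\|\leq\|b\|\leq 1$. Your reverse direction simply makes explicit the scaling argument (reducing to $\|b\|\leq 1$ to recover condition 2 of Theorem \ref{A}) that the paper leaves implicit, and it is valid.
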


Condition 2 in Theorem \ref{A} may be regarded as a ``relative interpolation property": if one can solve a system of inequalities with an operator in $\langle M,e_N\rangle_h$, then one can do the same in $M_h$.

Consider the case $N=\mathbb{C}$. Then by Theorem \ref{A}, the inclusion $\mathbb{C}\subset M$ is amenable if and only if for each operator systems $E\subseteq F$, if each completely positive map $\phi:E\rightarrow M\subset B(H)$ has a completely positive extension $\psi:F\rightarrow B(H)$, then $\phi$ has a completely positive extension $\tilde{\phi}:F\rightarrow M$. Note that the ``if" part always holds since $\langle M,e_\mathbb{C}\rangle=B(H)$ and $B(H)$ is an injective operator system \cite[p.17]{O}. Recall that an operator system $E\subseteq B(H)$ is injective \cite{CE77} if given operator systems $L\subseteq S\subseteq B(K)$, each completely positive map $\phi:L\rightarrow E$ has a completely positive extension $\psi:S\rightarrow E$. Then we have the inclusion $C\subset M$ is amenable if and only if $M$ is an injective operator system. Thus  condition 3 in Theorem \ref{A} can be seen as an analogue of injective operator systems.

\section{A stable property of relative amenable inclusions}
In this part, we show an application of Theorem \ref{A}. First, we need to prove the following lemmas on the direct integrals.

In this section, we assume that all the von Neumann algebras have separable predual.

\begin{lemma}\label{lemma1}
If $M=\int_X \bigoplus M_p d\mu$ is a decomposable von Neumann algebra acting on a decomposable separable Hilbert space $H=\int_X \bigoplus H_p d\mu$ over $(X,\mu)$, and $N\subset M$ is a von Neumann subalgebra, then $N$ is decomposable. If we write $N=\int_X \bigoplus N_p d\mu$, then $N_p\subset M_p$ almost everywhere.
\end{lemma}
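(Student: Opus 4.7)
The plan is to first show that every element of $N$ is a decomposable operator on $H$, and then to build an explicit decomposition of $N$ whose fibers sit inside those of $M$. The key tool is the characterization of decomposable operators as exactly those in the commutant of the diagonal algebra $\mathcal{D}$ of $(X,\mu)$-diagonal operators (that is, the von Neumann algebra of all decomposable operators on $H=\int_X \bigoplus H_p\, d\mu$ coincides with $\mathcal{D}'$). Since $M=\int_X \bigoplus M_p\, d\mu$ is decomposable, $M\subseteq \mathcal{D}'$, equivalently $\mathcal{D}\subseteq M'$. Because $N\subseteq M$, we get $\mathcal{D}\subseteq M'\subseteq N'$, hence $N\subseteq \mathcal{D}'$. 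Thus every element of $N$ is decomposable.

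Next I would construct the decomposition. By the standing separable-predual assumption, $N$ admits a norm-separable strong-operator-dense C$^*$-subalgebra $A$ (take a countable SOT-dense set, pass to the $*$-algebra it generates, and take norm closure). Since $A\subseteq N\subseteq \mathcal{D}'$, the identity representation $\iota$ of $A$ is decomposable, and each $a\in A$ has a fiber map $p\mapsto a(p)$ with $a(p)\in B(H_p)$. Defining $N_p:=\overline{\iota_p(A)}^{\operatorname{SOT}}$ for almost every $p$ yields von Neumann algebras satisfying $N=\int_X \bigoplus N_p\, d\mu$, which realizes the definition of decomposability for $N$.

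To verify $N_p\subseteq M_p$ almost everywhere, choose a countable norm-dense subset $\{a_n\}\subseteq A$. Each $a_n$ lies in $M$, and by the decomposability of $M$ one has $a_n(p)\in M_p$ off a null set $E_n$: pick a norm-separable SOT-dense C$^*$-subalgebra $B\subseteq M$ witnessing that $M$ is decomposable (with $\iota_p(B)$ SOT-dense in $M_p$ a.e.), approximate $a_n$ in norm by elements of $B$, and use that $\|x\|=\operatorname{ess\,sup}_p \|x(p)\|$ for decomposable $x$ to conclude $a_n(p)\in \overline{\iota_p(B)}^{\,\|\cdot\|}\subseteq M_p$ a.e. On the conull set $X\setminus \bigcup_n E_n$, every element of $\iota_p(A)$ lies in $M_p$, and since $M_p$ is SOT-closed, $N_p=\overline{\iota_p(A)}^{\operatorname{SOT}}\subseteq M_p$ there.

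The one delicate step is showing that the fiber of an arbitrary element of a decomposable von Neumann algebra lies in the prescribed fiber algebras almost everywhere; this is the heart of the argument and rests on the norm-approximation by the distinguished generating C$^*$-subalgebra $B$ together with the identity $\|x\|=\operatorname{ess\,sup}_p \|x(p)\|$. Once this standard direct-integral fact is in hand, the rest of the proof is routine, consisting of unioning countably many null sets (permissible because $A$ is norm-separable) and taking SOT-closures.
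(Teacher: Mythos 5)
The paper itself gives no argument here (it declares the lemma well known), so your proposal stands on its own. Its first half is sound and is the standard route: decomposable operators form the commutant of the diagonalizable algebra $\mathcal{D}$, so $N\subseteq M\subseteq \mathcal{D}'$ consists of decomposable operators; a norm-separable SOT-dense C$^*$-subalgebra $A\subseteq N$ (available since $H$ is separable) has decomposable identity representation, and setting $N_p=\overline{\iota_p(A)}^{\operatorname{SOT}}$ realizes Definition 2.4(iv).

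The step you yourself single out as the heart of the argument, however, does not work as written. You want $a_n(p)\in M_p$ a.e.\ for $a_n\in A\subseteq M$, and you propose to ``approximate $a_n$ in norm by elements of $B$,'' where $B$ is the norm-separable C$^*$-subalgebra witnessing the decomposability of $M$. But $B$ is only SOT-dense in $M$, not norm-dense; a von Neumann algebra such as $L^\infty[0,1]$ or any II$_1$ factor is not norm-separable, so a generic $a_n\in M$ lies far outside $\overline{B}^{\,\|\cdot\|}=B$, and the inequality $\|x\|=\operatorname{ess\,sup}_p\|x(p)\|$ gives you nothing to start from. The conclusion you need is true, but it requires a different mechanism: either quote the standard direct-integral fact that for a decomposable von Neumann algebra $R=\int_X\bigoplus R_p\,d\mu$ an operator belongs to $R$ if and only if it is decomposable with fibers in $R_p$ almost everywhere (this is exactly the statement the paper later invokes from \cite[14.1]{KR86} when it reassembles $z=\int_X z_p\,d\mu\in M$); or replace norm approximation by Kaplansky density: choose $b_k\in B$ with $\|b_k\|\leq\|a_n\|$ and $b_k\to a_n$ strongly, pass to a subsequence so that $b_k(p)\xi_m(p)\to a_n(p)\xi_m(p)$ almost everywhere for a countable set $\{\xi_m\}\subseteq H$ whose fibers are a.e.\ dense in $H_p$, and use the uniform fiberwise bound to get $a_n(p)\in\overline{\iota_p(B)}^{\operatorname{SOT}}=M_p$ a.e.; or apply the essential uniqueness of decompositions to the norm-separable C$^*$-algebra generated by $A\cup B$, which is SOT-dense in $M$, so that its fiber algebras generate $M_p$ a.e.\ and $\iota_p(A)\subseteq M_p$ a.e.\ follows directly. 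With any of these substitutions the rest of your argument (discarding countably many null sets and taking SOT-closures) is fine.
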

\begin{proof}
This result is well known to experts. We omit its proof here.
\end{proof}

\begin{lemma}\label{2lemma}
Assume $(M,\tau)$ is a finite von Neumann algebra. Let $M = \int_{X} \bigoplus
M_{p} d \mu$ and $L^2(M)=\int_{X} \bigoplus H_{p} d \mu$ be the
direct integral decompositions of $M$ and $L^2(M)$ relative to
$\mathcal Z_{ M}$ over $(X,\mu)$. Then there exist a faithful normal trace $\tau_p$ on $M_p$ almost everywhere such that for each $x\in M$, $\tau(x)=\int_{X} \tau_{p}(x_p) d \mu$, and $H_p=L^2(M_p,\tau_p)$ almost everywhere. Moreover, assume $N\subset M$ is a von Neumann subalgebra and write $N = \int_{X} \bigoplus
N_{p} d \mu$, if $e_N$ is decomposable relative to $\mathcal Z_{ M}$, then $(e_N)_p=e_{N_p}$ almost everywhere.
\end{lemma}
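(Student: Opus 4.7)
My plan is to construct the fiber traces $\tau_p$ by disintegrating the canonical tracial vector $\hat{1}\in L^2(M)$ and then, for the moreover clause, to identify the fibers of $e_N$ by testing them against countable SOT-dense sequences in $N$ and $M$. To begin, I decompose $\hat{1}$ as $p\mapsto\hat{1}_p\in H_p$ and define $\tau_p(a):=\langle a\hat{1}_p,\hat{1}_p\rangle$ for $a\in M_p$. Each $\tau_p$ is automatically normal as a vector state, and the identity $\tau(x)=\int_X\tau_p(x_p)\,d\mu$ is immediate from the defining property of the direct integral applied to $\langle x\hat{1},\hat{1}\rangle$. To obtain the trace property I exploit that $JMJ=M'$ forces $J\mathcal Z_MJ=\mathcal Z_M$, so $J$ is decomposable relative to $\mathcal Z_M$ with an antilinear fiber $J_p$ on $H_p$; fixing a countable SOT-dense sequence $\{x_n\}\subseteq M$ and decomposing the relations $Jx_n\hat{1}=x_n^*\hat{1}$ yields $J_p(x_n)_p\hat{1}_p=(x_n)_p^*\hat{1}_p$ outside a countable union of null sets, and because $x\mapsto x_p$ is a normal $*$-homomorphism almost everywhere, this extends to all of $M_p$, showing that $\hat{1}_p$ is tracial for $M_p$. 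For $H_p=L^2(M_p,\tau_p)$, totality of $\{\widehat{x_n}\}$ in $L^2(M)$ forces totality of $\{(x_n)_p\hat{1}_p\}$ in $H_p$ almost everywhere, giving cyclicity of $\hat{1}_p$ for $M_p$; faithfulness of $\tau_p$ then follows from the standard fact that a cyclic tracial vector is separating.

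For the \emph{moreover} part, the decomposability of $e_N$ makes $(e_N)_p$ a projection on $H_p$ almost everywhere (using $e_N=e_N^*=e_N^2$ together with the compatibility of the decomposition with involution and multiplication). I pick countable SOT-dense subsets $\{a_n\}\subseteq N$ and $\{b_n\}\subseteq M$ and decompose the two families of relations $e_N\widehat{a_n}=\widehat{a_n}$ and $e_N\widehat{b_n}=\widehat{E_N(b_n)}$, both valid in $L^2(M)$. Off a countable union of null sets this produces the pointwise identities $(e_N)_p(a_n)_p\hat{1}_p=(a_n)_p\hat{1}_p$ and $(e_N)_p(b_n)_p\hat{1}_p=(E_N(b_n))_p\hat{1}_p$; the latter lies in $N_p\hat{1}_p$ because $E_N(b_n)\in N$ and, by Lemma \ref{lemma1}, any element of $N$ decomposes with fibers in $N_p$ almost everywhere. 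The first family then shows that $(e_N)_p$ restricts to the identity on $\overline{N_p\hat{1}_p}=L^2(N_p,\tau_p)$, while the second shows that its range is contained in $L^2(N_p,\tau_p)$; together these force $(e_N)_p=e_{N_p}$ almost everywhere.

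The main obstacle is the careful management of exceptional null sets: every pointwise identity extracted from a direct integral decomposition holds only off a null set that depends on the chosen vector or operator, so I must first reduce to countable SOT-dense families in order to discard only a countable union of such sets. The other slightly subtle input is the fact that if $\{x_n\}$ is SOT-dense in $M$ then $\{(x_n)_p\}$ is SOT-dense in $M_p$ almost everywhere (with analogous statements for $N$ and for elements of $N$ regarded via Lemma \ref{lemma1}), which is exactly what the definition of a decomposable von Neumann algebra recalled in Section 2 provides.
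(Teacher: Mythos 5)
Your construction is essentially the paper's: you define $\tau_p$ as the vector state at the disintegrated trace vector $\hat{1}_p$, get the integral formula directly from the defining property of the direct integral, identify $H_p$ with $L^2(M_p,\tau_p)$ through cyclicity of $\hat{1}_p$ (density of $M_p\hat{1}_p$, i.e.\ \cite[Lemma 14.1.3]{KR86}), and pin down $(e_N)_p$ by its action on countable dense families, which is exactly how the paper identifies $(e_N)_p$ with $e_{N_p}$ once $(L^2(N))_p=L^2(N_p)$ is known. The null-set bookkeeping via countable SOT-dense families and the ``cyclic tracial vector is separating'' argument for faithfulness are fine, and your two-family argument for the \emph{moreover} part (identity on $\overline{N_p\hat{1}_p}$, range inside it) correctly forces $(e_N)_p=e_{N_p}$.

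The one step that does not stand as written is the traciality of $\tau_p$. You justify the fiberwise decomposition of the modular conjugation by ``$JMJ=M'$ forces $J\mathcal{Z}_MJ=\mathcal{Z}_M$, so $J$ is decomposable.'' Normalizing the diagonalizable algebra is not sufficient: an (anti)unitary normalizing $\mathcal{Z}_M\cong L^\infty(X,\mu)$ may implement a nontrivial automorphism of the measure algebra and permute fibers, in which case it has no fiberwise decomposition; moreover the decomposability criteria in \cite{KR86} are stated for linear operators only. What saves the claim is the stronger, and true, relation $JzJ=z^*$ for every $z\in\mathcal{Z}_M$ (since $JzJ$ is right multiplication by $z^*$, which is left multiplication for central $z$): composing $J$ with a measurable field of conjugations $C=\int_X C_p\,d\mu$ makes $CJ$ a linear operator commuting with all diagonalizable operators, hence decomposable by \cite[14.1.10]{KR86}, and then $J=\int_X J_p\,d\mu$ with antilinear fibers. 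Alternatively, you can bypass $J$ altogether, as the paper implicitly does by following \cite[Lemma 14.1.19]{KR86}: for $x,y$ in a countable SOT-dense family and all $z\in\mathcal{Z}_M$ one has $\tau(zxy)=\tau(zyx)$, i.e.\ $\int_X f(p)\bigl[\tau_p((xy)_p)-\tau_p((yx)_p)\bigr]\,d\mu=0$ for every bounded measurable $f$, whence $\tau_p(x_py_p)=\tau_p(y_px_p)$ almost everywhere, and normality of $\tau_p$ extends this to all of $M_p$. With either repair your argument is complete and coincides in substance with the paper's proof.
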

\begin{proof}
Since $M\hat{1}$ is dense in $L^2(M)$, by \cite[Lemma 14.1.3]{KR86}, we have $M_p\hat{1}_p$ is dense in $H_{p}$ for almost everywhere $p$.
Note that $x_p\mapsto \langle x_p \hat{1}_p,\hat{1}_p\rangle$ defines a positive normal linear functional on $B(H_p)$. We denote the restriction of this functional to $M_p$ by $\tau_p$. By \cite[Theorem 14.2.1]{KR86}, $\mathcal Z_{ M}$ is the algebra of diagonalizable operators relative to this decomposition. As in the proof of \cite[Lemma 14.1.19]{KR86}, it follows that, $\tau(x)=\int_{X} \tau_{p}(x_p) d \mu$ for each $x\in M$ and $\tau_p$ is, accordingly, faithful and tracial almost everywhere.

For almost every $p$ and for each $x\in M$, $\|x_p\hat{1}_p\|^2=\langle x_p \hat{1}_p,x_p \hat{1}_p\rangle=\|x_p \|^2_{\tau_p}$, then the linear operator $U_p:x_p\hat{1}_p\mapsto x_p $ extends to a unitary transformation from $H_p$ onto $L^2(M_p)$. Note that for almost every $p$ and for each $x_p,y_p\in M_p$, $U_p x_p U^*_p(y_p)=U_p x_py_p\hat{1}_p=x_py_p$, which means that $U_p M_p U^*_p=M_p$. Thus $\int_{X} \bigoplus H_{p} d \mu$ is unitarily equivalent to $\int_{X} \bigoplus L^2(M_p,\tau_p)d\mu$, and we may say $H_p=L^2(M_p,\tau_p)$ almost everywhere (in the sense of unitary equivalence) and $\hat{1}_p=id_p$ almost everywhere where $id=\int_{X} id_{p} d \mu$ is the identity operator in $M$.

From Lemma \ref{lemma1}, $N_p\subset M_p$ almost everywhere, so that $\tau_p$ is a faithful normal trace on $N_p$ almost everywhere.  Hence, by the discussion above, we can obtain $(L^2(N))_p=L^2(N_p) $ almost everywhere. Since $e_N$ is decomposable, $(e_N)_p$ is a projection almost everywhere from \cite[Lemma 14.1.20]{KR86}. 
By \cite[Lemma 14.1.3]{KR86}, $\{(e_N)_p(x_p),x\in M\}$ spans $(L^2(N))_p=L^2(N_p)$ almost everywhere. Thus $(e_N)_p$ is a projection from $L^2(M_p)$ onto $L^2(N_p)$ and $(e_N)_p=e_{N_p}$ almost everywhere.
\end{proof}

\begin{lemma}\label{lemma3}
Assume $(M,\tau)$ is a finite von Neumann algebra such that $M = \int_{X} \bigoplus
M_{p} d \mu$ and $L^2(M)=\int_{X} \bigoplus L^2(M_{p}) d \mu$ are decomposable over $(X,\mu)$ relative to $\mathcal Z_{ M}$, 
and $N\subset M$ is a von Neumann subalgebra with $\mathcal Z_{M}\subset N$. Then $\langle M, e_N\rangle$ is a decomposable von Neumann algebra and $\langle M, e_N\rangle = \int_{X} \bigoplus
\langle M_p, e_{N_{p}}\rangle d \mu$.
\end{lemma}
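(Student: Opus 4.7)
The plan has three movements: show that $e_N$ is itself a decomposable operator and identify its fibers; deduce that every element of $\langle M, e_N\rangle$ is decomposable; and pin down the fibers of $\langle M, e_N\rangle$ by exhibiting a separable weakly dense C*-subalgebra whose fiberwise decomposition generates $\langle M_p, e_{N_p}\rangle$ almost everywhere.

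For the first movement, I would use that $e_N$ lies in the commutant of the left action of $N$ on $L^2(M)$. Indeed, for any $n\in N$ the subspace $L^2(N)$ is invariant under both $n$ and $n^*$ (since $N$ is a $*$-algebra), whence the orthogonal projection satisfies $e_N n = n e_N$. The hypothesis $\mathcal{Z}_M\subset N$ then gives $e_N\in \mathcal{Z}_M'$. Because $L^2(M) = \int_X \bigoplus L^2(M_p)\, d\mu$ is decomposed relative to $\mathcal{Z}_M$, the algebra $\mathcal{Z}_M$ coincides with the diagonal algebra, and by the standard identification (\cite[Theorem 14.1.10]{KR86}) the decomposable operators are exactly the elements of $\mathcal{Z}_M'$. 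Hence $e_N$ is decomposable, and Lemma \ref{2lemma} yields $(e_N)_p = e_{N_p}$ almost everywhere.

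For the second and third movements, note that $M\subset \mathcal{Z}_M'$ is automatic (from the definition of the center), so together with $e_N\in \mathcal{Z}_M'$ we get $\langle M, e_N\rangle\subset \mathcal{Z}_M'$: every element of $\langle M, e_N\rangle$ is a decomposable operator. To realize Definition 2.3(iv) for $\langle M, e_N\rangle$, I would pick a norm-separable strong-operator-dense C*-subalgebra $A_0\subset M$ (available because $M$ has separable predual) and set $A = C^*(A_0\cup\{e_N\})$, which is still norm-separable, weakly dense in $\langle M, e_N\rangle$, and consists of decomposable operators. Its identity representation $\iota$ thus decomposes fiberwise. To identify the fibers, outside a single null set $\iota_p(A_0)$ is strong-operator dense in $M_p$ (from decomposability of $M$) and $\iota_p(e_N) = e_{N_p}$ (step one), whence $\iota_p(A) = C^*(\iota_p(A_0)\cup\{e_{N_p}\})$ is strong-operator dense in $\langle M_p, e_{N_p}\rangle$. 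This simultaneously yields the decomposability of $\langle M, e_N\rangle$ and the identification $\langle M, e_N\rangle_p = \langle M_p, e_{N_p}\rangle$ almost everywhere.

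The main obstacle is the null-set bookkeeping in the last step: the strong-operator density of $\iota_p(A_0)$ in $M_p$ is stated only \emph{almost everywhere}, and the density of $\iota_p(A)$ in $\langle M_p, e_{N_p}\rangle$ must be extracted on a single conull set rather than on a countable union of possibly non-null sets. This is a routine consequence of the separability of the predual of $M$ (one works with a countable SOT-dense sequence in $A_0$ and exploits the fact that a C*-algebra containing a countable SOT-dense subset of $M_p$ together with $e_{N_p}$ generates $\langle M_p, e_{N_p}\rangle$), but it is the point of the argument that requires the most care.
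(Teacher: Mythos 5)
Your proof is correct, but it takes a genuinely different route from the paper's. The paper works through the commutant formula $\langle M,e_N\rangle=JN'J$: since $\mathcal{Z}_M\subset N$ forces $N'\subset\mathcal{Z}_M'$, every $JtJ$ with $t\in\langle M,e_N\rangle$ is decomposable by \cite[Theorem 14.1.10]{KR86}; the fibers are then identified from $(N')_p=(N_p)'$ almost everywhere \cite[14.1.24]{KR86} together with an inner-product computation (resting on Lemma \ref{2lemma}) showing that conjugation by $J$ decomposes fiberwise, $\bigl(J(\int_X s_p\,d\mu)J\bigr)_p=J_ps_pJ_p$, so that $t_p\in J_p(N_p)'J_p=\langle M_p,e_{N_p}\rangle$. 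You instead observe directly that $e_N$ commutes with the left action of $N$, hence with $\mathcal{Z}_M$, so $e_N$ is decomposable and Lemma \ref{2lemma} gives $(e_N)_p=e_{N_p}$; then, since $\langle M,e_N\rangle\subset\mathcal{Z}_M'$, you realize the definition of decomposability with fibers $\langle M_p,e_{N_p}\rangle$ via the norm-separable generating C*-algebra $C^*(A_0\cup\{e_N\})$. Each approach buys something: the paper's commutant route identifies the fibers without choosing a generating algebra, so it avoids the uniqueness-of-fibers and null-set bookkeeping that your last step needs (which you rightly flag as the delicate point; to minimize it, take $A_0$ to be the generating algebra witnessing the decomposability of $M$, so that $\iota_p(A_0)$ is strong-operator dense in $M_p$ almost everywhere by definition rather than by an appeal to essential uniqueness of the decomposition), while your route is more elementary in that it dispenses with the fiberwise behaviour of the conjugation $J$ and with $(N')_p=(N_p)'$, and it makes explicit at the outset that $e_N$ is decomposable --- the hypothesis under which Lemma \ref{2lemma} yields $(e_N)_p=e_{N_p}$ --- a fact the paper only obtains after establishing decomposability of all of $\langle M,e_N\rangle$.
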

\begin{proof}
From \cite[14.2.1]{KR86}, $\mathcal Z_{M}$ is the algebra of the diagonalizable operators relative to this decomposition. Note that for each $t\in\langle M, e_N\rangle=JN'J$, we have $JtJ\in N'\cap B(L^2(M))\subset Z_{M}^{'} $, it follows that $JtJ$ is decomposable from \cite[14.1.10]{KR86}.
Write $JtJ=\int_{X} s_p d\mu$ with $s_p\in (N^{'})_{p}$, then $t=J\int_{X} s_p d\mu J$.  By \cite[14.1.24]{KR86}, $(N^{'})_{p}=(N_{p})^{'}$ almost everywhere.

Claim that $t=J\int_{X} s_p d\mu J=\int_{X}J_p s_p J_p d\mu $.

\emph{Proof of the claim}. For $x,y\in M$, we may write $x=\int_{X} x_p d\mu$, $y=\int_{X} y_p d\mu$. By \cite[Theorem 14.1.8]{KR86}, $(x_p)^*=(x^*)_p, (y_p)^*=(y^*)_p$ almost everywhere. By Lemma \ref{2lemma} we have
\begin{align*}
\langle J(\int_{X} s_p d\mu )J x \hat{1},y\hat{1}\rangle&=\overline{\langle (\int_{X} s_p d\mu) J x \hat{1},J y\hat{1}\rangle}\\
&=\overline{\langle (\int_{X} s_p d\mu) x^* \hat{1},y^*\hat{1}\rangle}\\
 &=\overline{ \int_{X}\langle s_p (x^*)_p \hat{1}_p,(y^*)_p\hat{1}_p\rangle d\mu}\\
 &=\overline{ \int_{X}\langle s_p J_p x_p \hat{1}_p,J_p y_p\hat{1}_p\rangle d\mu}\\
  &=\int_{X}\langle J_p s_p J_p x_p \hat{1}_p, y_p\hat{1}_p\rangle d\mu.
\end{align*}
Thus $\langle M, e_N\rangle$ is decomposable relative to $\mathcal Z_{M}$,
 and $t_p=J_p s_p J_p\in \langle M_p, e_{N_{p}}\rangle$ almost everywhere. By Lemma \ref{2lemma}, we have $\langle M, e_N\rangle_p=\langle M_p, (e_{N})_p\rangle=\langle M_p, e_{N_{p}}\rangle$ almost everywhere. Hence, we obtain that $\langle M, e_N\rangle = \int_{X} \bigoplus
\langle M_p, e_{N_{p}}\rangle d \mu$.
\end{proof}

Now we are proceed to show an application of Theorem \ref{A}. The proof idea comes from \cite[Proposition 6.5]{cones}
\begin{proposition}\label{propositoin}
Let $(M,\tau)$ be a finite von Neumann algebra and $N\subset M $ be a von Neumann subalgebra with $\mathcal Z_{M}\subset N$. Assume $M = \int_{X} \bigoplus
M_{p} d \mu$ and $L^2(M)=\int_{X} \bigoplus L^2(M_p) d \mu$ are the
direct integral decompositions of $M$ and $L^2(M)$ relative to
$\mathcal Z_{ M}$, where $(X, \mu)$ is a standard Borel space with probability measure $\mu$. Then the inclusion $N\subset M$ is amenable if and only if $N_p \subset M_p$ is amenable almost everywhere.
\end{proposition}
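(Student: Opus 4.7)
The plan is to combine the interpolation characterization from Theorem \ref{A}(2) (refined by Corollary \ref{c1}) with Lemma \ref{lemma3}, which identifies $\langle M,e_N\rangle=\int_X\bigoplus\langle M_p,e_{N_p}\rangle\,d\mu$. Under this identification, a self-adjoint $b\in\langle M,e_N\rangle$ disintegrates as $b=\int b_p\,d\mu$ with $b_p\in \langle M_p,e_{N_p}\rangle_h$ almost everywhere, and a matrix inequality $\lambda\otimes b\le S$ in $\mathbb{M}_n(\mathbb{C})\otimes \langle M,e_N\rangle$ holds precisely when it holds fiberwise a.e. Amenability of $N\subset M$ thus reduces to a pointwise solvability question for these inequalities, up to measurable selection.

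For $(\Leftarrow)$, I will assume $N_p\subset M_p$ is amenable for a.e.\ $p$ and verify Theorem \ref{A}(2) for $N\subset M$. Given data $(n,S,\lambda,b)$ with $\lambda\otimes b\le S$, decompose $S=\int S_p\,d\mu$ and $b=\int b_p\,d\mu$ to get $\lambda\otimes b_p\le S_p$ almost everywhere. Corollary \ref{c1} applied fiberwise supplies, for a.e.\ $p$, a nonempty $\sigma$-weakly compact convex set $W_p=\{y\in(M_p)_h:\lambda\otimes y\le S_p,\ \|y\|\le\|b\|\}$. A Kuratowski--Ryll-Nardzewski style measurable selection applied to $p\mapsto W_p$ (taking values in the $\sigma$-weakly compact unit ball of $M_p$) yields a measurable section $p\mapsto x_p$, and the direct integral $x=\int x_p\,d\mu\in M_h$ satisfies $\lambda\otimes x\le S$.

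For $(\Rightarrow)$, assume $N\subset M$ is amenable and let $\tilde E\colon \langle M,e_N\rangle\to M$ be a conditional expectation as given by Theorem 1.1. A cut-down reduction handles the measure-theoretic issue: if the set $Y\subset X$ on which $N_p\subset M_p$ fails to be amenable has positive measure, then for the central projection $z=\chi_Y\in \mathcal Z_M\subset N$ we have $z\langle M,e_N\rangle z=\langle zMz,e_{zNz}\rangle$, and the restriction of $\tilde E$ to this corner takes values in $zMz$ (by $M$-bimodularity of $\tilde E$) and is a conditional expectation; hence $zNz\subset zMz$ is amenable and I may assume $Y=X$. I will then derive a contradiction by measurably selecting fiberwise failure witnesses $(S_p^\circ,\lambda_p,b_p)$ from the failure of Corollary \ref{c1} at each $p$, on a positive-measure subset where the parameters $n$ and $\lambda$ are constant (there are countably many choices for $n$, and after a further approximation argument also for $\lambda$), and integrating them to produce global data obstructing Corollary \ref{c1} for $N\subset M$.

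The main obstacle is the measurable selection step, particularly in $(\Rightarrow)$, where both the matrix parameter $\lambda$ and the operator data must be selected coherently across fibers. An alternative route closer to \cite[Proposition 6.5]{cones} is to work directly with the state $\omega=\tau\circ\tilde E$ on $\langle M,e_N\rangle$, which contains $M$ in its centralizer (condition (3) of Theorem 1.1): one disintegrates $\omega$ via the decomposable-state formalism of Section 2 applied to a norm-separable $\sigma$-weakly dense $C^*$-subalgebra of $\langle M,e_N\rangle$, and then verifies that the fiber states $\omega_p$ retain $M_p$ in their centralizers almost everywhere by testing against a countable norm-dense family of pairs and using the norm-continuity of states.
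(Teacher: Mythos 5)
Your overall strategy is the paper's: reduce amenability of $N\subset M$ to the interpolation criterion of Corollary \ref{c1}, use Lemma \ref{lemma3} to disintegrate $\langle M,e_N\rangle$ fiberwise, and pass back and forth by measurable selection. The $(\Leftarrow)$ direction can indeed be run directly as you sketch (solve fiberwise, select, integrate), with one omission you should repair: the measurable selection machinery you invoke (\cite[14.3]{KR86}, \cite[Appendix V]{D81}) requires all fibers to live on one fixed Hilbert space, so one must first reduce to the case $L^2(M_p)=K$ almost everywhere via \cite[Lemma 14.1.23]{KR86} and transport back by the unitaries at the end; the paper makes this reduction explicitly, and without it the map $p\mapsto W_p$ is not even a set-valued map into a fixed standard Borel space.

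The genuine gap is in $(\Rightarrow)$, exactly at the step you flag and defer: making the matrix parameter $\lambda$ constant ``after a further approximation argument.'' Failure of Corollary \ref{c1} at a fiber says that a certain non-strict operator inequality has no solution in the self-adjoint unit ball of $M_p$; this is not an open condition in $\lambda$. If you replace $\lambda_p$ by a nearby $\lambda'$ from a countable dense family, you can restore the constraint $\lambda'\otimes b_p\le S_p'$ by enlarging $S_p$, but a solution $x_p$ of the perturbed system then only yields $\lambda_p\otimes x_p\le S_p+2\varepsilon$, which contradicts nothing; moreover the bare unsolvability statement gives you no bounded decomposable object to select and integrate. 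The paper resolves both issues at once by first applying Hahn--Banach separation: non-amenability is re-expressed as the existence of $\phi\in(\mathbb{M}_n(\mathbb{C})\otimes\langle M,e_N\rangle)^+_*$ with the strict, quantified gap $\phi(\lambda_{n,j}\otimes x)>\phi(S)+\tfrac1q$ for all admissible $x$, where $\lambda_{n,j}$ runs through a countable dense family. This formulation is stable under small perturbations of $\lambda$, is indexed by countably many quadruples $(n,j,q,k)$ (so one quadruple works on a set of positive measure), and supplies the triple $(S_p,b_p,\phi_p)$ of bounded Borel data that can be selected and integrated to contradict Corollary \ref{c1} for $N\subset M$. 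Without this (or an equivalent quantitative reformulation) your $(\Rightarrow)$ argument does not close. Your fallback route is also problematic: $\omega=\tau\circ\tilde E$ is in general a non-normal state, and such states need not admit any decomposition $\omega=\int_X\omega_p\,d\mu$ compatible with the fibers (already for $L^\infty(X)=\int_X\mathbb{C}\,d\mu$ a singular state has no such decomposition), so the decomposable-state formalism of Section 2 cannot simply be applied to $\tau\circ\tilde E$.
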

\begin{proof}
From  Corollary \ref{c1}, we know that the inclusion $N\subset M$ is not amenable if and only if there exist $n\in \mathbb{N}$, $S=S^*\in \mathbb{M}_n(\mathbb{C})\otimes M$, $\lambda=\lambda^*\in \mathbb{M}_n(\mathbb{C})$, $b=b^*\in \langle M,e_N\rangle$ with $\|b\|\leq 1$, such that $\lambda\otimes b\leq S$, then for each $x=x^*\in M$ with $\|x\|\leq1$, $\lambda\otimes x\nleq S$. Then by the Hahn-Banach separation theorem, there exist $\varepsilon>0$ and $\phi\in (\mathbb{M}_n(\mathbb{C})\otimes\langle M,e_N\rangle)^+_*$ such that $\phi(\lambda\otimes x)>\phi(S)+\varepsilon$ for the above $\lambda,x,S$.

For each $n\in\mathbb{N}$, let $(\lambda_{n,j})_{j\in\mathbb{N}}$ be a norm dense sequence in the self-adjoint part of $\mathbb{M}_n(\mathbb{C})$.
Then the inclusion $N\subset M$ is not amenable if and only if there exist $$n,j,q,k\in \mathbb{N},~S=S^*\in \mathbb{M}_n(\mathbb{C})\otimes M,~b=b^*\in \langle M,e_N\rangle,~\text{and~}\phi\in (\mathbb{M}_n(\mathbb{C})\otimes\langle M,e_N\rangle)^+_*$$ with $\|S\|\leq k$, $\|b\|\leq 1$ and $\|\phi\|\leq k$, such that $$\lambda_{n,j}\otimes b\leq S,~\phi(\lambda_{n,j}\otimes x)>\phi(S)+\frac{1}{q},\text{~for each~} x=x^*\in M, ~\|x\|\leq1.$$

We first assume that $L^2(M_p)=K$ almost everywhere for some separable Hilbert space $K$. This assumption is necessary since we will use a measure-theoretic result in the proof.

$\Leftarrow$ Assume $N_p \subset M_p$ is amenable almost everywhere, we show that the inclusion $N\subset M$ is amenable.

If the inclusion $N\subset M$ is not amenable, then we can choose $n,j,q,k,S,b,\phi$ as above. By Lemma \ref{lemma3}, $b$ is decomposable with $b_p\in \langle M_p, e_{N_{p}}\rangle$.
Since $M$ is decomposable, by \cite[p.188, p.201]{D81}, we have $$\lambda_{n,j}\otimes b=\int_{X} (\lambda_{n,j}\otimes b_p)d\mu,~\mathbb{M}_n(\mathbb{C})\otimes\langle M,e_N\rangle=\int_{X} \bigoplus (\mathbb{M}_n(\mathbb{C})\otimes\langle M,e_N\rangle_p) d \mu.$$ Note that $b$, $\lambda_{n,j}$ and $S$ are all self-adjoint, by \cite[Proposition 14.1.9]{KR86}, we deduce that $\lambda_{n,j}\otimes b_p\leq S_p$ almost everywhere.

Since  $N_p \subset M_p$ is amenable almost everywhere, we can find some $z_p=z_p^{*}\in M_p$ with $\|z_p\|\leq1$ such that $\lambda_{n,j}\otimes z_p\leq S_p$, and using the measurable selection technique (see \cite[14.3]{KR86} or \cite[Appendix V]{D81}, note that the measurable selection technique requires all the Hilbert spaces should be the same), we can choose these $z_p$'s to be Borel.

From \cite[Lemma 14.1.19]{KR86}
, we may assume $\phi$ is decomposable, so that $\phi_p(\lambda_{n,j}\otimes z_p)\leq\phi_p(S_p)$ almost everywhere since $\phi_p$ is a positive normal linear functional. Note that $z_p$ is essentially bounded and Borel, let $z=\int_{S} z_p d\mu$, \color{black}then by \cite[Proposition 14.1.9, Proposition 14.1.18]{KR86}, $z=z^*\in M$ and $\|z\|\leq k$. Thus $$\phi(\lambda_{n,j}\otimes z )=\int_X\phi_p(\lambda_{n,j}\otimes z_p)d\mu\leq\int_X\phi_p(S_p)d\mu=\phi(S).$$ This contradicts to the assumption that $\phi(\lambda_{n,j}\otimes x)>\phi(S)+\frac{1}{q}$ for each $x=x^*\in M$ with $\|x\|\leq k$.

$\Rightarrow$. Assume that the inclusion $N\subset M$ is amenable, we show that $N_p \subset M_p$ is amenable almost everywhere.

If there exists a non-negligible set $X_0$ such that $N_p \subset M_p$ is not amenable for $p\in X_0$, then there are integers $n,j,q,k$ such that the corresponding set of $p$'s is non-negligible, and, say, is equal to $Y\subseteq X_0$.
Then for each $p\in Y$, there exist $n,j,q,k\in \mathbb{N}$,
$$S_p=S_p^*\in \mathbb{M}_n(\mathbb{C})\otimes M_p, ~b_p=b_p^*\in \langle M_p,e_{N_p}\rangle, \text{~and~} \phi_p\in (\mathbb{M}_n(\mathbb{C})\otimes\langle M_p,e_{N_p}\rangle)^+_*$$
with $\|S_p\|\leq k$, $\|b_p\|\leq 1$ and $\|\phi_p\|\leq k$, such that $$\lambda_{n,j}\otimes b_p\leq S_p, ~\phi_p(\lambda_{n,j}\otimes x_p)>\phi(S_p)+\frac{1}{q}, \text{~for~each}~x_p=x_p^*\in M_p,~\|x_p\|\leq1.$$ Further more, using measurable selection technique (see \cite[14.3]{KR86} or \cite[Appendix V]{D81}), we can choose the corresponding families to be Borel.
Since these $b_p$'s, $S_p$'s and $\phi_p$'s are essentially bounded, there exist $b,S,\phi$ such that $$b=\int_Y b_p d\mu,~S=\int_Y S_p d\mu, ~\phi=\int_Y \phi_p d\mu.$$ Then from \cite[Proposition 14.1.18]{KR86}, Lemma \ref{lemma3}, and \cite[p.223]{D81}, we have $$S=S^*\in \mathbb{M}_n(\mathbb{C})\otimes M,~b=b^*\in \langle M,e_{N}\rangle,~\phi\in (\mathbb{M}_n(\mathbb{C})\otimes\langle M,e_{N}\rangle)^+_*.$$
Then by \cite[Lemma 14.1.8]{KR86}, we have $$\lambda_{n,j}\otimes b\leq S,~\phi(\lambda_{n,j}\otimes x)>\phi(S)+\frac{1}{q},\text{~for~each~}x=x^*\in M, \|x\|\leq1,$$which contradicts to the assumption that the inclusion $N\subset M$ is amenable.

Thus we prove our result for the case that $L^2(M_p)=K$ almost everywhere for some separable Hilbert space $K$.

For the general case, from \cite[Lemma 14.1.23]{KR86}, there exist a family of unitary transformations $\{U_p\}$ from $L^2(M_p)$ onto a separable Hilbert space $K$, such that $p\mapsto \xi_p$ is measurable for each $\xi\in L^2(M)$, and $p\mapsto U_p x_p U_p^*$ is measurable for each $x\in M$. If we write $U=\int_X U_p d\mu$, then $L^2(M)=\int_{X} \bigoplus L^2(M_{p}) d \mu$ is unitarily equivalent to $\int_{X} \bigoplus K d \mu$, and $M = \int_{X} \bigoplus
M_{p} d \mu$ is unitarily equivalent to $\int_{X} \bigoplus
U_p M_{p}U_p^* d \mu$. Note that for unitary transformations $U$ and $U_p$, the inclusion $UNU^*\subset UMU^*$ is amenable if and only if $N\subset M$ is amenable; the inclusion $U_pN_pU_p^*\subset U_pM_pU_p^*$ is amenable if and only if $N_p\subset M_p$ is amenable. Hence, from the preceding argument we have shown, the inclusion $N\subset M$ is amenable if and only if $N_p \subset M_p$ is amenable almost everywhere.
\end{proof}

{\bf Acknowledgment.} The second author was supported by the Project sponsored by the NSFC grant 11431011 and startup funding from Hebei Normal University.

\end{document}